\newtheorem{thm}{Theorem}[section]
\newtheorem{rem}[thm]{Remark}
\numberwithin{equation}{section}
\begin{document}

\begin{frontmatter}



\title{Several inequalities concerning interpolation in classical Fourier analysis}


\author{Runzhe Zhang and Hua Wang \footnote{E-mail address: 3089486628@qq.com, wanghua@pku.edu.cn.}\\
\textbf{Dedicated to the memory of Li Xue}.\\
\address{School of Mathematics and System Sciences, Xinjiang University,\\
Urumqi 830046, P. R. China}}

\begin{abstract}
In this note, we establish several interpolation inequalities in $\mathbb R^n$ in the Lebesgue spaces and Morrey spaces. By using the classical Calderon--Zygmund decomposition, we will reprove that $L^{p}(\mathbb R^n)\cap\mathrm{BMO}(\mathbb R^n)\subset L^{q}(\mathbb R^n)$ for all $q$ with $p<q<\infty$, where $1\leq p<\infty$. We also reprove that there exists a constant $C(p,q,n)$ depending on $p,q,n$ such that the following inequality
\begin{equation*}
\|f\|_{L^q}\leq C(p,q,n)\cdot\big(\|f\|_{L^p}\big)^{p/q}\cdot\big(\|f\|_{\mathrm{BMO}}\big)^{1-p/q}
\end{equation*}
holds for all $f\in L^{p}(\mathbb R^n)\cap\mathrm{BMO}(\mathbb R^n)$ with $1\leq p<\infty$. Moreover, this embedding constant has the optimal growth order $q$ as $q\to\infty$, which was given by Chen--Zhu, and Kozono--Wadade. We will show that $L^{p,\kappa}(\mathbb R^n)\cap\mathrm{BMO}(\mathbb R^n)\subset L^{q,\kappa}(\mathbb R^n)$ for all $q$ with $p<q<\infty$, where $1\leq p<\infty$ and $0<\kappa<1$. Moreover, there exists a constant $\widetilde{C}(p,q,n)$ depending on $p,q,n$ such that
\begin{equation*}
\|f\|_{L^{q,\kappa}}\leq \widetilde{C}(p,q,n)\cdot\big(\|f\|_{L^{p,\kappa}}\big)^{p/q}\cdot\big(\|f\|_{\mathrm{BMO}}\big)^{1-p/q}
\end{equation*}
holds for all $f\in L^{p,\kappa}(\mathbb R^n)\cap\mathrm{BMO}(\mathbb R^n)$ with $1\leq p<\infty$ and $0<\kappa<1$. This embedding constant is shown to have the linear growth order as $q\to\infty$, that is, $\widetilde{C}(p,q,n)\leq C_n\cdot q$ with the constant $C_n$ depending only on the dimension $n$, when $q$ is large. As an application of the above results, some new bilinear estimates are also established, which can be used in the study of the global existence and regularity of weak solutions to elliptic and parabolic partial differential equations of the second order.
\end{abstract}

\begin{keyword}
Lebesgue spaces, BMO, Morrey spaces, interpolation inequalities, Calderon--Zygmund decomposition
\MSC[2020] 42B20 \sep 42B25 \sep 42B35
\end{keyword}

\end{frontmatter}

\section{Introduction}
\label{intro}
In this paper, the symbols $\mathbb R$ and $\mathbb N$ stand for the sets of all real numbers and natural numbers, respectively. Let $\mathbb R^n$ be the $n$-dimensional Euclidean space endowed with the Euclidean norm $|\cdot|$ and the Lebesgue measure $dx$. For any given $p\in[1,\infty)$, the space $L^p(\mathbb R^n)$ is defined as the set of all integrable functions $f$ on $\mathbb R^n$ such that
\begin{equation*}
\|f\|_{L^p}:=\bigg(\int_{\mathbb R^n}|f(x)|^p\,dx\bigg)^{1/p}<+\infty,
\end{equation*}
and the weak space $WL^{p}(\mathbb R^n)$ is defined as the set of all measurable functions $f$ on $\mathbb R^n$ such that
\begin{equation*}
\|f\|_{WL^p}:=\sup_{\lambda>0}\lambda\cdot \big|\big\{x\in\mathbb R^n:|f(x)|>\lambda\big\}\big|^{1/p}<+\infty.
\end{equation*}
In the sequel, for a measurable set $E\subset\mathbb R^n$, the $n$-dimensional Lebesgue measure of $E$ is denoted by $|E|$. For positive $\lambda$, the function $d_{f}(\lambda):=\big|\big\{x\in\mathbb R^n:|f(x)|>\lambda\big\}\big|$ is called the distribution function of $f$. For the case $p=\infty$, $WL^\infty(\mathbb R^n)$ will be taken to mean $L^\infty(\mathbb R^n)$, which is defined as the set of all measurable functions $f$ on $\mathbb R^n$ such that
\begin{equation*}
\|f\|_{L^\infty}:=\underset{x\in\mathbb R^n}{\mbox{ess\,sup}}\,|f(x)|<+\infty.
\end{equation*}
We say that a locally integrable function $f$ on $\mathbb R^n$ has bounded mean oscillation, if
\begin{equation*}
\|f\|_{\mathrm{BMO}}:=\sup_{Q\subset\mathbb R^n}\frac{1}{|Q|}\int_{Q}|f(x)-f_{Q}|\,dx<+\infty,
\end{equation*}
where the supremum is taken over all cubes $Q$ in $\mathbb R^n$ with sides parallel to the coordinate axes, and $f_{Q}$ denotes the integral mean of $f$ over the cube $Q$, namely
\begin{equation*}
f_{Q}:=\frac{1}{|Q|}\int_{Q}f(y)\,dy.
\end{equation*}
Modulo constants, $\|\cdot\|_{\mathrm{BMO}}$ defines a norm and the Banach space of such functions is denoted by $\mathrm{BMO}(\mathbb R^n)$.
\begin{rem}
It is well known that for any $1\leq p<\infty$, $L^p(\mathbb R^n)\subset WL^{p}(\mathbb R^n)$, and this inclusion is strict. Let $F(x):=|x|^{-n/p}$ defined on $\mathbb R^n$ with the usual Lebesgue measure. It is obvious that $F$ is not in $L^p(\mathbb R^n)$ but $F$ is in $WL^p(\mathbb R^n)$ with
$\|F\|_{WL^p}=(\nu_n)^{1/p}$, where $\nu_n$ is the measure of the unit ball in $\mathbb R^n$.
\end{rem}

\begin{rem}
It is well known that $L^\infty(\mathbb R^n)\subset \mathrm{BMO}(\mathbb R^n)$, and this inclusion is strict. The typical example of a function that is in $\mathrm{BMO}(\mathbb R^n)$ but not in $L^\infty(\mathbb R^n)$ is $F(x):=\log |x|$ on $\mathbb R^n$. Furthermore, it is not difficult to see that if $\delta\geq0$, then $(\log|x|)^{\delta}\in \mathrm{BMO}(\mathbb R^n)$ if and only if $\delta\leq1$. The space $\mathrm{BMO}(\mathbb R^n)$ arises naturally as a substitute for the space $L^\infty(\mathbb R^n)$ in certain limiting cases. For example, the Calderon--Zygmund operator with standard kernel is known to be bounded on $L^p(\mathbb R^n)$(for $1<p<\infty$) but not bounded in $L^p(\mathbb R^n)$ for $p=\infty$. Actually, it is bounded from $L^\infty(\mathbb R^n)$ into $\mathrm{BMO}(\mathbb R^n)$. The Riesz potential operator of order $\alpha$ is known to be bounded from $L^p(\mathbb R^n)$ into $L^q(\mathbb R^n)$ when $1<p<q<\infty$ and $1/q=1/p-\alpha/n$, but not bounded from $L^p(\mathbb R^n)$ into $L^{\infty}(\mathbb R^n)$ when $p=n/{\alpha}$. In this case, it is actually bounded from $L^{n/{\alpha}}(\mathbb R^n)$ into $\mathrm{BMO}(\mathbb R^n)$(see \cite{duoand,stein}).
\end{rem}

Let us now recall the definitions of the Morrey space and weak Morrey space (see \cite{adams1}). Let $1\leq p<\infty$ and $0\leq\kappa\leq1$. We say that a locally integrable function $f$ on $\mathbb R^n$ belongs to the Morrey space $L^{p,\kappa}(\mathbb R^n)$, if
\begin{equation*}
\|f\|_{L^{p,\kappa}}:=\sup_{Q\subset\mathbb R^n}\bigg(\frac{1}{|Q|^{\kappa}}\int_{Q}|f(y)|^p\,dy\bigg)^{1/p}<+\infty,
\end{equation*}
where the supremum is taken over all cubes $Q$ in $\mathbb R^n$. We say that a measurable function $f$ on $\mathbb R^n$ belongs to the weak Morrey space $WL^{p,\kappa}(\mathbb R^n)$, if
\begin{equation*}
\|f\|_{WL^{p,\kappa}}:=\sup_{Q\subset\mathbb R^n}\sup_{\lambda>0}
\frac{1}{|Q|^{\kappa/p}}\lambda\cdot\big|\big\{y\in Q:|f(y)|>\lambda\big\}\big|^{1/p}<+\infty,
\end{equation*}
where the supremum is taken over all cubes $Q\subset\mathbb R^n$ and all $\lambda>0$.
\begin{rem}
When $\kappa=0$, then we have $L^{p,0}(\mathbb R^n)=L^p(\mathbb R^n)$ and $WL^{p,0}(\mathbb R^n)=WL^p(\mathbb R^n)$ by definition, and when $\kappa=1$, then we have $L^{p,1}(\mathbb R^n)=L^\infty(\mathbb R^n)$ by the Lebesgue differentiation theorem.
\end{rem}

\begin{rem}
Let $0\leq\kappa<1$. It is well known that the Hardy--Littlewood maximal operator $M$ is bounded from $L^{1,\kappa}(\mathbb R^n)$ to $WL^{1,\kappa}(\mathbb R^n)$ by \cite{chia}, but it is not bounded on $L^{1,\kappa}(\mathbb R^n)$ by \cite{nakai}. This suggests that there exists a function $\mathcal{F}\in WL^{1,\kappa}(\mathbb R^n)$ such that $\mathcal{F}\notin L^{1,\kappa}(\mathbb R^n)$. A constructive proof of this fact can be seen in \cite{gun}. Moreover, it can be shown that there exists a function $\mathcal{F}\in WL^{p,\kappa}(\mathbb R^n)\setminus L^{p,\kappa}(\mathbb R^n)$ for $1\leq p<\infty$ and $0<\kappa<1$ (see \cite[Theorem 1.2]{gun}, for instance).
\end{rem}
In this work, we shall be interested in various interpolation inequalities in the Lebesgue spaces and Morrey spaces. The main results of this paper can be stated as follows.

Let $1\leq p<\infty$ and $n\in\mathbb N$.
\begin{itemize}
  \item $L^{p}(\mathbb R^n)\cap L^\infty(\mathbb R^n)\subset L^{q}(\mathbb R^n)$ for all $q$ with $p<q<\infty$.
  \item $WL^{p}(\mathbb R^n)\cap L^\infty(\mathbb R^n)\subset L^{q}(\mathbb R^n)$ for all $q$ with $p<q<\infty$.
  \item $L^{p}(\mathbb R^n)\cap\mathrm{BMO}(\mathbb R^n)\subset L^{q}(\mathbb R^n)$ for all $q$ with $p<q<\infty$.
\end{itemize}
The third result was obtained by Chen and Zhu in \cite[Theorem 2]{Chen}, by using the technique of the non-increasing rearrangement, and then extended by many authors, see, for example, \cite[Theorems 2.1 and 2.2]{Kozono2} and \cite[Theorem 1.1]{wadade}. A new, more elementary proof of this result, avoiding the non-increasing rearrangement, is given below.

Let $1\leq p<\infty$, $0<\kappa<1$ and $n\in\mathbb N$.
\begin{itemize}
  \item $L^{p,\kappa}(\mathbb R^n)\cap L^\infty(\mathbb R^n)\subset L^{q,\kappa}(\mathbb R^n)$ for all $q$ with $p<q<\infty$.
  \item $WL^{p,\kappa}(\mathbb R^n)\cap L^\infty(\mathbb R^n)\subset L^{q,\kappa}(\mathbb R^n)$ for all $q$ with $p<q<\infty$.
  \item $L^{p,\kappa}(\mathbb R^n)\cap\mathrm{BMO}(\mathbb R^n)\subset L^{q,\kappa}(\mathbb R^n)$ for all $q$ with $p<q<\infty$.
\end{itemize}
Moreover, the embedding constants in the inclusion relations above are computed explicitly. The sharpness of these interpolation inequalities (optimal growth rate as $q\to\infty$) is also investigated. As a consequence of our main results, some new bilinear estimates are also considered, which is in connection with smoothness properties (H\"{o}lder-continuity) of weak solutions of nonlinear elliptic and parabolic equations.

\section{Main results}
\label{sec:1}
\subsection{Interpolation inequalities in the Lebesgue spaces}
In this section, we are concerned with some interpolation inequalities in the Lebesgue spaces. The first two results are known in the literature and can be found in \cite{Grafakos2} (see also \cite{li}). Here we give the proof for the reader's convenience.
\label{sec2}
\begin{thm}\label{thma}
Let $1\leq p<\infty$ and $n\in\mathbb N$. If $f\in L^p(\mathbb R^n)\cap L^\infty(\mathbb R^n)$, then for every $q$ with $p<q<\infty$, we have $f\in L^q(\mathbb R^n)$, and the following inequality holds.
\begin{equation*}
\|f\|_{L^q}\leq 1\cdot\big(\|f\|_{L^p}\big)^{p/q}\cdot\big(\|f\|_{L^\infty}\big)^{1-p/q}.
\end{equation*}
\end{thm}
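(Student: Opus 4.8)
The plan is to prove this by the elementary ``splitting the exponent'' technique, which avoids any reliance on the distribution function or rearrangement machinery. Since $p<q<\infty$, the exponent $q-p$ is strictly positive, so the idea is to factor the integrand $|f|^q$ as $|f|^{q-p}\cdot|f|^{p}$ and to control the first factor by the essential supremum of $f$.

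First I would note that $|f(x)|\leq\|f\|_{L^\infty}$ for almost every $x\in\mathbb R^n$, which gives the pointwise (a.e.) bound $|f(x)|^{q-p}\leq\big(\|f\|_{L^\infty}\big)^{q-p}$. Substituting this into the integral defining $\|f\|_{L^q}^{q}$ and pulling the constant factor out of the integral yields
\begin{equation*}
\int_{\mathbb R^n}|f(x)|^{q}\,dx=\int_{\mathbb R^n}|f(x)|^{q-p}\,|f(x)|^{p}\,dx\leq\big(\|f\|_{L^\infty}\big)^{q-p}\int_{\mathbb R^n}|f(x)|^{p}\,dx=\big(\|f\|_{L^\infty}\big)^{q-p}\big(\|f\|_{L^p}\big)^{p}.
\end{equation*}
In particular the right-hand side is finite because $f\in L^p(\mathbb R^n)\cap L^\infty(\mathbb R^n)$, which already establishes that $f\in L^q(\mathbb R^n)$. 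Raising both sides to the power $1/q$ and rewriting $(q-p)/q=1-p/q$ produces exactly the asserted inequality with embedding constant $1$.

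The only point requiring any care is the justification of the almost-everywhere bound $|f(x)|\leq\|f\|_{L^\infty}$, which is immediate from the definition of the essential supremum, so I expect no genuine obstacle here; the proof is entirely routine. I would remark that the sharp constant $1$ is optimal, as equality is approached by functions that are (essentially) constant on a set of finite measure, showing that the interpolation is tight in the $L^\infty$ endpoint case.
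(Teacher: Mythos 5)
Your proof is correct and follows essentially the same route as the paper's: both split $|f|^q=|f|^p\cdot|f|^{q-p}$, bound the factor $|f|^{q-p}$ almost everywhere by $\big(\|f\|_{L^\infty}\big)^{q-p}$, and take $q$-th roots. Your closing observation about the sharpness of the constant $1$ also matches the remark made in the paper after Theorem~\ref{thmb}.
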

\begin{proof}
The proof is very simple. Notice that $q-p>0$. It is easy to see that
\begin{equation*}
\begin{split}
\|f\|_{L^q}&=\bigg(\int_{\mathbb R^n}|f(x)|^p\cdot|f(x)|^{q-p}\,dx\bigg)^{1/q}\\
&\leq \bigg(\int_{\mathbb R^n}|f(x)|^p\,dx\bigg)^{1/q}\cdot\big(\|f\|_{L^\infty}\big)^{1-p/q}\\
&=\big(\|f\|_{L^p}\big)^{p/q}\cdot\big(\|f\|_{L^\infty}\big)^{1-p/q},
\end{split}
\end{equation*}
as desired.
\end{proof}

\begin{thm}\label{thmb}
Let $1\leq p<\infty$ and $n\in\mathbb N$. If $f\in WL^p(\mathbb R^n)\cap L^\infty(\mathbb R^n)$, then for every $q$ with $p<q<\infty$, we have $f\in L^q(\mathbb R^n)$, and the following inequality holds.
\begin{equation*}
\|f\|_{L^q}\leq\left(\frac{q}{q-p}\right)^{1/q}\cdot\big(\|f\|_{WL^p}\big)^{p/q}\cdot\big(\|f\|_{L^\infty}\big)^{1-p/q}.
\end{equation*}
\end{thm}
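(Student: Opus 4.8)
The plan is to reduce the $L^q$ norm to an integral over the distribution function $d_f$ and then feed in the two hypotheses as pointwise bounds on $d_f(\lambda)$. The starting point is the layer-cake identity
\[
\|f\|_{L^q}^q=q\int_0^\infty\lambda^{q-1}d_f(\lambda)\,d\lambda,
\]
which holds for any measurable $f$ and converts the whole problem into estimating a single one-dimensional integral. The two hypotheses will each contribute one bound on $d_f$. From the definition of the weak norm, $\lambda\cdot d_f(\lambda)^{1/p}\leq\|f\|_{WL^p}$ for every $\lambda>0$, so that $d_f(\lambda)\leq\|f\|_{WL^p}^{p}\,\lambda^{-p}$. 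From $f\in L^\infty$ we have $|f(x)|\leq\|f\|_{L^\infty}$ almost everywhere, hence $d_f(\lambda)=0$ whenever $\lambda>\|f\|_{L^\infty}$, which truncates the range of integration to the finite interval $(0,\|f\|_{L^\infty}]$.

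Inserting both facts into the layer-cake identity I would obtain
\[
\|f\|_{L^q}^q\leq q\,\|f\|_{WL^p}^{p}\int_0^{\|f\|_{L^\infty}}\lambda^{q-p-1}\,d\lambda.
\]
The exponent satisfies $q-p-1>-1$ precisely because $q>p$, so the integral converges at the origin and evaluates to $\|f\|_{L^\infty}^{\,q-p}/(q-p)$. Collecting the constants gives $\|f\|_{L^q}^q\leq\tfrac{q}{q-p}\,\|f\|_{WL^p}^{p}\,\|f\|_{L^\infty}^{\,q-p}$, and taking $q$-th roots, together with the identity $(q-p)/q=1-p/q$, reproduces exactly the claimed constant $\bigl(q/(q-p)\bigr)^{1/q}$ and the stated powers $p/q$ and $1-p/q$. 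The degenerate cases $\|f\|_{WL^p}=0$ or $\|f\|_{L^\infty}=0$ force $f=0$ almost everywhere and are trivial, so one may assume both quantities are positive.

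The computation is elementary, and I do not expect a genuine obstacle here; the only point deserving attention is the complementary role of the two hypotheses in the single integral. The $L^\infty$ assumption is what caps the upper endpoint of integration, without which the integrand $\lambda^{q-p-1}$ would fail to be integrable at infinity, while the weak-$L^p$ bound governs the behaviour near zero, where convergence is exactly what the condition $q>p$ provides. In contrast to the BMO endpoint addressed later in the paper, no Calder\'on--Zygmund decomposition or non-increasing rearrangement is needed, and the sharpness of the constant is transparent from the explicit evaluation of the elementary integral.
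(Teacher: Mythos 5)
Your proposal is correct and coincides with the paper's own argument: both express $\|f\|_{L^q}^q$ via the distribution function, use $d_f(\lambda)=0$ for $\lambda\geq\|f\|_{L^\infty}$ to truncate the integral, and apply the weak-$L^p$ bound $d_f(\lambda)\leq\|f\|_{WL^p}^{p}\lambda^{-p}$ on the remaining interval to obtain the constant $\bigl(q/(q-p)\bigr)^{1/q}$. No discrepancies to report.
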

\begin{proof}
First we estimate the $L^q$-norm of $f$ in terms of the distribution function.
\begin{equation}\label{WLp}
\begin{split}
\|f\|^q_{L^q}&=\int_{\mathbb R^n}|f(x)|^q\,dx
=\int_0^{\infty}\big|\big\{x\in\mathbb R^n:|f(x)|>\lambda\big\}\big|q\lambda^{q-1}\,d\lambda\\
&=\bigg(\int_0^{\|f\|_{L^\infty}}+\int_{\|f\|_{L^\infty}}^{\infty}\bigg)\big|\big\{x\in\mathbb R^n:|f(x)|>\lambda\big\}\big|q\lambda^{q-1}\,d\lambda.
\end{split}
\end{equation}
Note that
\begin{equation*}
\big|\big\{x\in\mathbb R^n:|f(x)|>\lambda\big\}\big|=0,\quad \mathrm{when}~ \lambda\geq\|f\|_{L^\infty}.
\end{equation*}
Hence, the second integral in \eqref{WLp} is equal to $0$. Let us now estimate the first integral in \eqref{WLp}. Since $f\in WL^p(\mathbb R^n)$ with $1\leq p<\infty$, we can deduce that
\begin{equation*}
\begin{split}
&\int_0^{\|f\|_{L^\infty}}\big|\big\{x\in\mathbb R^n:|f(x)|>\lambda\big\}\big|q\lambda^{q-1}\,d\lambda\\
&\leq \int_0^{\|f\|_{L^\infty}}\left(\frac{\|f\|_{WL^p}}{\lambda}\right)^pq\lambda^{q-1}\,d\lambda\\
&=\frac{q}{q-p}\cdot\big(\|f\|_{WL^p}\big)^p\cdot\big(\|f\|_{L^\infty}\big)^{q-p}.
\end{split}
\end{equation*}
Observe that the above integral converges since $q-p>0$. From this, the desired result follows immediately.
\end{proof}

\begin{rem}
It should be pointed out that both the embedding constant $1$ in Theorem \ref{thma} and the embedding constant $[q/{(q-p)}]^{1/q}$ in Theorem \ref{thmb} are optimal.
\end{rem}

The following result is well known in harmonic analysis (see \cite{John}).
\begin{thm}[The John--Nirenberg theorem]\label{JN}
Let $n\in\mathbb N$ and $f\in\mathrm{BMO}(\mathbb R^n)$. There exist two positive constants $C_1$ and $C_2$, depending only on the dimension $n$, such that for any cube $\mathcal{Q}$ in $\mathbb R^n$ and any $\lambda>0$,
\begin{equation*}
\big|\big\{x\in \mathcal{Q}:|f(x)-f_{\mathcal{Q}}|>\lambda\big\}\big|\leq C_1|\mathcal{Q}|\exp\bigg\{-\frac{C_2\lambda}{\|f\|_{\mathrm{BMO}}}\bigg\}.
\end{equation*}
\end{thm}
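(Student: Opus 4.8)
The plan is to run the Calder\'on--Zygmund decomposition iteratively, which produces geometric decay of the exceptional set along a discrete sequence of heights, and then to fill in the intermediate values of $\lambda$. By homogeneity I would first normalize $\|f\|_{\mathrm{BMO}}=1$: applying the conclusion to $f/\|f\|_{\mathrm{BMO}}$ and rescaling $\lambda$ recovers the general statement with the exponent $-C_2\lambda/\|f\|_{\mathrm{BMO}}$, since the average $f_{\mathcal{Q}}$ scales the same way. I then fix the cube $\mathcal{Q}$, a height $\alpha>1$ (say $\alpha=2$), and set $A:=2^n\alpha$; both $\alpha$ and $A$ depend only on $n$.

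First I would record the single-step decomposition. If $Q$ is any cube and $h$ is integrable on $Q$ with $|Q|^{-1}\int_Q|h|\le 1$, then subdividing $Q$ dyadically and stopping the first time the average of $|h|$ over a subcube exceeds $\alpha$ yields disjoint subcubes $\{Q_j\}$ with $\alpha<|Q_j|^{-1}\int_{Q_j}|h|\le 2^n\alpha$, with $|h|\le\alpha$ almost everywhere on $Q\setminus\bigcup_j Q_j$ by the Lebesgue differentiation theorem, and with $\sum_j|Q_j|\le\alpha^{-1}\int_Q|h|\le\alpha^{-1}|Q|$. The factor $2^n$ in the upper bound comes from passing to the dyadic parent, on which the average is at most $\alpha$ by the stopping rule.

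Next I would iterate. Applying the decomposition to $g:=f-f_{\mathcal{Q}}$ on $\mathcal{Q}$---legitimate because $|\mathcal{Q}|^{-1}\int_{\mathcal{Q}}|g|\le\|f\|_{\mathrm{BMO}}=1$---produces first-generation cubes $\{Q_j\}$, on each of which $|f_{Q_j}-f_{\mathcal{Q}}|\le A$. The key point is that the BMO condition gives $|Q_j|^{-1}\int_{Q_j}|f-f_{Q_j}|\le 1$ on every such cube, so the decomposition may be reapplied to $f-f_{Q_j}$ on $Q_j$. By induction this generates, at the $k$-th stage, a disjoint family of total measure at most $\alpha^{-k}|\mathcal{Q}|$, while on the complement of the $k$-th generation cubes one has $|f-f_{\mathcal{Q}}|\le kA$, since each passage to a child cube costs at most $\alpha\le A$ in the local deviation and at most $A$ in the jump of the averages. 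Hence $|\{x\in\mathcal{Q}:|f(x)-f_{\mathcal{Q}}|>kA\}|\le\alpha^{-k}|\mathcal{Q}|$ for every $k\in\mathbb N$.

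Finally, for arbitrary $\lambda>0$ I would choose the integer $k$ with $kA\le\lambda<(k+1)A$ and dominate the distribution at level $\lambda$ by that at level $kA$; since $k>\lambda/A-1$, the geometric factor satisfies $\alpha^{-k}<\alpha\,e^{-(\ln\alpha)\lambda/A}$, giving the claim with $C_1=\alpha$ and $C_2=(\ln\alpha)/A$ depending only on $n$ (the range $\lambda<A$ is covered by the same $C_1$ because $\alpha\,e^{-C_2 A}=1$). The main obstacle is the bookkeeping in the inductive step: one must check that the exceptional set at height $kA$ lies, up to a null set, in the union of the $k$-th generation cubes, and that the local averages $f_{Q^{(k)}}$ drift from $f_{\mathcal{Q}}$ by at most $A$ per generation. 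Both facts hinge on invoking the uniform BMO bound on every cube produced by the recursion, which is precisely what makes the iteration self-sustaining.
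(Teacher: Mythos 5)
Your argument is correct, but there is nothing in the paper to compare it against: the paper does not prove Theorem \ref{JN} at all. It states the John--Nirenberg inequality as a known result, cites \cite{John}, and quotes the explicit constants $C_1=e$, $C_2=(2^n e)^{-1}$ from \cite[Theorem 7.1.6]{Grafakos}; those constants are then what get used in the proofs of Theorems \ref{thmc} and \ref{Thm3}. What you have written is the classical stopping-time proof --- a single-step Calder\'on--Zygmund selection at height $\alpha>1$, iterated so that generation-$k$ cubes have total measure at most $\alpha^{-k}|\mathcal{Q}|$ while the averages drift by at most $A=2^n\alpha$ per generation, followed by interpolation of the discrete levels $kA$ to arbitrary $\lambda$ --- and this is essentially the proof in the references the paper cites, so the only difference is that you prove what the paper quotes. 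Your two flagged bookkeeping points are exactly the right ones and both check out: the inclusion, up to null sets, of $\{x\in\mathcal{Q}:|f(x)-f_{\mathcal{Q}}|>kA\}$ in the union of generation-$k$ cubes follows by induction, since a point outside generation $k$ either lies outside generation $k-1$ (induction hypothesis) or lies in some $Q^{(k-1)}$ but off its stopping children, where a.e.\ $|f-f_{Q^{(k-1)}}|\le\alpha$ and $|f_{Q^{(k-1)}}-f_{\mathcal{Q}}|\le(k-1)A$; and the recursion is self-sustaining precisely because $\|f\|_{\mathrm{BMO}}=1$ bounds the mean oscillation uniformly over every cube produced. One cosmetic remark: your choice $\alpha=2$ gives $C_1=2$ and $C_2=\ln 2/2^{n+1}$, whereas optimizing $C_2=\ln\alpha/(2^n\alpha)$ over $\alpha$ leads to $\alpha=e$, which recovers exactly the constants $C_1=e$ and $C_2=(2^n e)^{-1}$ that the paper takes from \cite{Grafakos}.
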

More specifically, we can take $C_1=e$ and $C_2=(2^n e)^{-1}$. See, for example, \cite[Theorem 7.1.6]{Grafakos}.

The following result was established by Chen and Zhu in \cite{Chen}, and then extended by Kozono and Wadade \cite{Kozono2}, and Wadade \cite{wadade}.
However, our method is different from theirs.
\begin{thm}\label{thmc}
Let $1\leq p<\infty$ and $n\in\mathbb N$. If $f\in L^p(\mathbb R^n)\cap \mathrm{BMO}(\mathbb R^n)$, then for every $q$ with $p<q<\infty$, we have $f\in L^q(\mathbb R^n)$, and the following inequality holds.
\begin{equation*}
\|f\|_{L^q}\leq C(p,q,n)\cdot\big(\|f\|_{L^p}\big)^{p/q}\cdot\big(\|f\|_{\mathrm{BMO}}\big)^{1-p/q},
\end{equation*}
where
\begin{equation*}
C(p,q,n):=\bigg[\frac{q}{q-p}+2^{\frac{nq}{p}}+2^{nq}\Gamma(q+1)e^{a+q+1}\bigg]^{1/q} \quad \mathrm{with}~~ a=\frac{1}{2^{\frac{n}{p'}}\cdot e}.
\end{equation*}
\end{thm}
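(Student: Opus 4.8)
The plan is to exploit the homogeneity of the inequality together with the Calder\'on--Zygmund decomposition and the John--Nirenberg estimate (Theorem~\ref{JN}). Since both sides scale linearly under $f\mapsto cf$, it suffices to establish the equivalent $q$-th power form $\|f\|_{L^q}^q\le C(p,q,n)^q\,\|f\|_{L^p}^p\,\|f\|_{\mathrm{BMO}}^{q-p}$. The starting point is the layer-cake (distribution function) identity already used in the proof of Theorem~\ref{thmb}, namely $\|f\|_{L^q}^q=\int_0^\infty q\lambda^{q-1}d_f(\lambda)\,d\lambda$, so the whole argument reduces to a good pointwise-in-$\lambda$ bound for $d_f(\lambda)$.

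To produce such a bound I would apply the Calder\'on--Zygmund decomposition to the $L^1$ function $|f|^p$ at height $\alpha^p$, where the threshold $\alpha>0$ will be fixed only at the very end. This yields a countable family of pairwise disjoint cubes $\{Q_j\}$ with $\alpha^p<|Q_j|^{-1}\int_{Q_j}|f|^p\le 2^n\alpha^p$ and $|f|\le\alpha$ a.e. on the complement of $\Omega:=\bigcup_j Q_j$. Two consequences drive the estimate: first, $|\Omega|=\sum_j|Q_j|\le\alpha^{-p}\|f\|_{L^p}^p$; second, H\"older's inequality together with the upper Calder\'on--Zygmund bound gives $|f_{Q_j}|\le(2^n\alpha^p)^{1/p}=2^{n/p}\alpha$ for every $j$.

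I would then split the layer-cake integral at the two levels $\lambda=\alpha$ and $\lambda=2^{n/p}\alpha$. On $(0,\alpha]$ the trivial Chebyshev bound $d_f(\lambda)\le\|f\|_{L^p}^p\lambda^{-p}$ integrates to the term $\tfrac{q}{q-p}\|f\|_{L^p}^p\alpha^{q-p}$. On $(\alpha,2^{n/p}\alpha]$ one has $\{|f|>\lambda\}\subset\Omega$, whence $d_f(\lambda)\le|\Omega|\le\alpha^{-p}\|f\|_{L^p}^p$, and integrating $q\lambda^{q-1}$ over this short interval produces the term $2^{nq/p}\|f\|_{L^p}^p\alpha^{q-p}$. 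On $(2^{n/p}\alpha,\infty)$, for $x\in Q_j$ the bound on $|f_{Q_j}|$ forces $|f(x)-f_{Q_j}|>\lambda-2^{n/p}\alpha$, so the John--Nirenberg estimate (with $C_1=e$, $C_2=(2^ne)^{-1}$) summed over $j$ yields $d_f(\lambda)\le e\,|\Omega|\exp\{-C_2(\lambda-2^{n/p}\alpha)/\|f\|_{\mathrm{BMO}}\}$.

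The main technical point, and the place where care is needed, is the tail integral over $(2^{n/p}\alpha,\infty)$: a crude expansion of $\lambda^{q-1}$ would cost an extra factor growing like $2^{q}$ and spoil the constant. Instead I would factor out $\exp\{C_2\cdot 2^{n/p}\alpha/\|f\|_{\mathrm{BMO}}\}$ and enlarge the domain of integration back to $(0,\infty)$, reducing the tail to the Gamma integral $\int_0^\infty q\lambda^{q-1}e^{-C_2\lambda/\|f\|_{\mathrm{BMO}}}\,d\lambda=\Gamma(q+1)(2^ne\|f\|_{\mathrm{BMO}})^{q}$. Finally, choosing $\alpha=\|f\|_{\mathrm{BMO}}$ makes $\alpha^{q-p}=\|f\|_{\mathrm{BMO}}^{q-p}$, aligning the first two terms with the desired right-hand side, and turns the exponential prefactor into exactly $e^{a}$ with $a=2^{-n/p'}e^{-1}$, since $C_2\cdot 2^{n/p}=2^{-n/p'}e^{-1}$. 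Collecting the three contributions gives precisely $C(p,q,n)^q=\tfrac{q}{q-p}+2^{nq/p}+2^{nq}\Gamma(q+1)e^{a+q+1}$, and the finiteness of all three integrals also certifies $f\in L^q(\mathbb R^n)$ a posteriori.
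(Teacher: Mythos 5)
Your proposal is correct and takes essentially the same route as the paper's own proof: a Calder\'on--Zygmund decomposition of $|f|^p$ at height $\|f\|_{\mathrm{BMO}}^p$ (you merely defer fixing the threshold $\alpha$ until the end, which is cosmetic), the layer-cake formula split at the levels $\alpha$ and $2^{n/p}\alpha$, Chebyshev plus the measure bound $|\Omega|\le\alpha^{-p}\|f\|_{L^p}^p$ for the first two pieces, and the John--Nirenberg estimate for the tail, where your factoring out of $e^{C_2 2^{n/p}\alpha/\|f\|_{\mathrm{BMO}}}=e^{a}$ and enlargement of the integral to $(0,\infty)$ is exactly the paper's bound $\int_a^\infty e^{-(u-a)}u^{q-1}\,du\le e^a\Gamma(q)$ after a change of variables. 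The three resulting terms reproduce the paper's constant $C(p,q,n)^q=\tfrac{q}{q-p}+2^{nq/p}+2^{nq}\Gamma(q+1)e^{a+q+1}$ exactly, so there is nothing to correct.
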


\begin{proof}
We give an alternative proof of this result, which is based on the classical Calderon--Zygmund decomposition and Theorem \ref{JN}.
By the assumption $f\in L^p(\mathbb R^n)\cap \mathrm{BMO}(\mathbb R^n)$, we know that
\begin{equation*}
0<\|f\|_{\mathrm{BMO}}<+\infty.
\end{equation*}
For $1\leq p<\infty$, we apply the Calderon--Zygmund decomposition of $|f|^p$ at height $\sigma:=\|f\|^p_{\mathrm{BMO}}$ to obtain a collection of disjoint cubes $\{Q_k\}_{k\geq1}$ such that the following properties hold.
\begin{equation*}
\begin{cases}
\mathbb R^n=F\cup\Omega,~ F\cap\Omega=\O;&\\
|f(x)|^p\leq\sigma,~~a.e.~ x\in F;&\\
\Omega=\bigcup_{k=1}^{\infty}Q_k,~\sigma<\frac{1}{|Q_k|}\int_{Q_k}|f(x)|^p\,dx\leq 2^n\sigma, ~\forall\,k\in \mathbb{N}.
\end{cases}
\end{equation*}
We now estimate the $L^q$-norm of $f$ in terms of the distribution function with $p<q<\infty$.
\begin{equation}\label{bmo1}
\begin{split}
\|f\|^q_{L^q}&=\int_{\mathbb R^n}|f(x)|^q\,dx
=\int_0^{\infty}\big|\big\{x\in\mathbb R^n:|f(x)|>\lambda\big\}\big|q\lambda^{q-1}\,d\lambda\\
&=\int_0^{\infty}\big|\big\{x\in F:|f(x)|>\lambda\big\}\big|q\lambda^{q-1}\,d\lambda\\
&+\int_0^{\infty}\big|\big\{x\in\Omega:|f(x)|>\lambda\big\}\big|q\lambda^{q-1}\,d\lambda\\
&:=\mathrm{I+II}.
\end{split}
\end{equation}
Let us first give the estimation of the term I. In view of the above construction, we have $|f(x)|\leq \|f\|_{\mathrm{BMO}}$, $a.e.~ x\in F$, which yields
\begin{equation*}
\big|\big\{x\in F:|f(x)|>\lambda\big\}\big|=0
\end{equation*}
whenever $\lambda\geq \|f\|_{\mathrm{BMO}}$. Consequently,
\begin{equation*}
\begin{split}
\mathrm{I}&=\bigg(\int_0^{\|f\|_{\mathrm{BMO}}}+\int_{\|f\|_{\mathrm{BMO}}}^{\infty}\bigg)
\big|\big\{x\in F:|f(x)|>\lambda\big\}\big|q\lambda^{q-1}\,d\lambda\\
&=\int_0^{\|f\|_{\mathrm{BMO}}}\big|\big\{x\in F:|f(x)|>\lambda\big\}\big|q\lambda^{q-1}\,d\lambda.
\end{split}
\end{equation*}
Since $f\in L^p(\mathbb R^n)$ with $p<q$, the last expression is bounded by
\begin{equation*}
\begin{split}
&\int_0^{\|f\|_{\mathrm{BMO}}}\left(\frac{\|f\|_{L^p}}{\lambda}\right)^pq\lambda^{q-1}\,d\lambda\\
&=\frac{q}{q-p}\big(\|f\|_{L^p}\big)^p\cdot\big(\|f\|_{\mathrm{BMO}}\big)^{q-p}.
\end{split}
\end{equation*}
Now we turn our attention to the other term II. By using H\"older's inequality, we can see that when $x\in Q_k$, $k\in \mathbb{N}$,
\begin{equation*}
\begin{split}
|f(x)|&\leq|f(x)-f_{Q_k}|+\frac{1}{|Q_k|}\int_{Q_k}|f(x)|\,dx\\
&\leq|f(x)-f_{Q_k}|+\bigg(\frac{1}{|Q_k|}\int_{Q_k}|f(x)|^p\,dx\bigg)^{1/p}\\
&\leq |f(x)-f_{Q_k}|+2^{n/p}\|f\|_{\mathrm{BMO}}.
\end{split}
\end{equation*}
Hence
\begin{equation}\label{termII}
\begin{split}
\mathrm{II}&\leq\int_0^{\infty}\sum_{k=1}^\infty\big|\big\{x\in Q_k:|f(x)|>\lambda\big\}\big|q\lambda^{q-1}\,d\lambda\\
&\leq\int_0^{\infty}\sum_{k=1}^\infty
\big|\big\{x\in Q_k:|f(x)-f_{Q_k}|>\lambda-2^{\frac{n}{p}}\|f\|_{\mathrm{BMO}}\big\}\big|q\lambda^{q-1}\,d\lambda\\
&=\bigg(\int_0^{2^{\frac{n}{p}}\|f\|_{\mathrm{BMO}}}+\int_{2^{\frac{n}{p}}\|f\|_{\mathrm{BMO}}}^\infty\bigg)\sum_{k=1}^\infty
\big|\big\{x\in Q_k:|f(x)-f_{Q_k}|>\lambda-2^{\frac{n}{p}}\|f\|_{\mathrm{BMO}}\big\}\big|q\lambda^{q-1}\,d\lambda.
\end{split}
\end{equation}
Observe that if $\lambda<2^{n/p}\|f\|_{\mathrm{BMO}}$, then for each cube $Q_k$($k\in \mathbb{N}$),
\begin{equation*}
\big\{x\in Q_k:|f(x)-f_{Q_k}|>\lambda-2^{\frac{n}{p}}\|f\|_{\mathrm{BMO}}\big\}=Q_k.
\end{equation*}
In this case, we then have
\begin{equation}\label{111}
\begin{split}
&\sum_{k=1}^\infty\big|\big\{x\in Q_k:|f(x)-f_{Q_k}|>\lambda-2^{\frac{n}{p}}\|f\|_{\mathrm{BMO}}\big\}\big|\\
&=\sum_{k=1}^\infty|Q_k|<\frac{1}{\sigma}\sum_{k=1}^\infty\int_{Q_k}|f(x)|^p\,dx\\
&=\frac{1}{\sigma}\int_{\bigcup_{k=1}^{\infty}Q_k}|f(x)|^p\,dx
\leq\left(\frac{\|f\|_{L^p}}{\|f\|_{\mathrm{BMO}}}\right)^p.
\end{split}
\end{equation}
From \eqref{111}, it follows that the first summand in \eqref{termII} is bounded by
\begin{equation*}
\int_0^{2^{\frac{n}{p}}\|f\|_{\mathrm{BMO}}}\left(\frac{\|f\|_{L^p}}{\|f\|_{\mathrm{BMO}}}\right)^pq\lambda^{q-1}\,d\lambda
=2^{\frac{nq}{p}}\big(\|f\|_{L^p}\big)^p\cdot\big(\|f\|_{\mathrm{BMO}}\big)^{q-p}.
\end{equation*}
On the other hand, by using Theorem \ref{JN}, the second summand in \eqref{termII} can be estimated as follows.
\begin{equation*}
\begin{split}
&\int_{2^{\frac{n}{p}}\|f\|_{\mathrm{BMO}}}^\infty\sum_{k=1}^\infty
\big|\big\{x\in Q_k:|f(x)-f_{Q_k}|>\lambda-2^{\frac{n}{p}}\|f\|_{\mathrm{BMO}}\big\}\big|q\lambda^{q-1}\,d\lambda\\
&=\sum_{k=1}^\infty\int_0^{\infty}
\big|\big\{x\in Q_k:|f(x)-f_{Q_k}|>\nu\big\}\big|q\Big(\nu+2^{\frac{n}{p}}\|f\|_{\mathrm{BMO}}\Big)^{q-1}\,d\nu\\
&\leq eq\sum_{k=1}^\infty|Q_k|\times\int_0^{\infty}\exp\bigg\{-\frac{\nu}{2^n e\|f\|_{\mathrm{BMO}}}\bigg\}
\cdot\Big(\nu+2^{\frac{n}{p}}\|f\|_{\mathrm{BMO}}\Big)^{q-1}\,d\nu\\
&\leq eq\left(\frac{\|f\|_{L^p}}{\|f\|_{\mathrm{BMO}}}\right)^p\int_0^{\infty}e^{-\mu}
\cdot\Big(2^n e\|f\|_{\mathrm{BMO}}\cdot\mu+2^{\frac{n}{p}}\|f\|_{\mathrm{BMO}}\Big)^{q-1}\Big(2^n e\|f\|_{\mathrm{BMO}}\Big)\,d\mu\\
&=eq\big(2^ne\big)^q\big(\|f\|_{L^p}\big)^p\cdot\big(\|f\|_{\mathrm{BMO}}\big)^{q-p}
\int_0^{\infty}e^{-\mu}\left(\mu+\frac{1}{2^{\frac{n}{p'}}\cdot e}\right)^{q-1}d\mu,
\end{split}
\end{equation*}
where in the last inequality we have used the estimate \eqref{111}. Moreover, a direct calculation shows that
\begin{equation*}
\begin{split}
\int_0^{\infty}e^{-\mu}\left(\mu+\frac{1}{2^{\frac{n}{p'}}\cdot e}\right)^{q-1}d\mu
&=\int_{a}^{\infty}e^{-(u-a)}u^{q-1}du~\Big(a=\frac{1}{2^{\frac{n}{p'}}\cdot e}\Big)\\
&\leq e^a\Gamma(q),
\end{split}
\end{equation*}
where $\Gamma(\cdot)$ denotes the usual gamma function. Summing up the above estimates, we conclude that
\begin{equation*}
\|f\|^q_{L^q}\leq \mathcal{A}(p,q,n)\big(\|f\|_{L^p}\big)^p\cdot\big(\|f\|_{\mathrm{BMO}}\big)^{q-p}.
\end{equation*}
Here
\begin{equation*}
\begin{split}
\mathcal{A}(p,q,n)&=\frac{q}{q-p}+2^{\frac{nq}{p}}+2^{nq}q\Gamma(q)e^{a+q+1}\\
&=\frac{q}{q-p}+2^{\frac{nq}{p}}+2^{nq}\Gamma(q+1)e^{a+q+1} \quad \mathrm{with}~~ a=\frac{1}{2^{\frac{n}{p'}}\cdot e}.
\end{split}
\end{equation*}
This implies our desired estimate.
\end{proof}

Some remarks are in order.
\begin{rem}
$(1)$ The constant $C(p,q,n)$ in Theorem \ref{thmc} has the form
\begin{equation*}
C(p,q,n)=\mathcal{O}(q),\quad \mbox{as}~ q\to\infty,
\end{equation*}
that is, $C(p,q,n)\leq C_n\cdot q$ when $q$ is large. Here $C_n$ is an absolute constant independent of $q$. Indeed, it is easy to see that
\begin{equation*}
\left(\frac{q}{q-p}\right)^{1/q}=\mathcal{O}(1),\quad \frac{a+q+1}{q}=\mathcal{O}(1)~(q\to\infty),
\quad \& \quad \big(2^{\frac{nq}{p}}\big)^{1/q}=2^{\frac{n}{p}}\leq 2^n.
\end{equation*}
In addition, with Stirling's formula, we further have
\begin{equation*}
\Gamma(q+1)^{1/q}=\mathcal{O}(q),\quad \mbox{as}~ q\to\infty,
\end{equation*}
from which the desired result follows immediately. Furthermore, it can be shown that the growth order $q$ in Theorem \ref{thmc} is sharp as $q\to\infty$(see \cite[Theorem 2.2 and Remark]{Kozono2}).

$(2)$ It is not clear to us whether Theorem \ref{thmc} can be improved, we do not know whether the conclusion of Theorem \ref{thmc} still holds for $f\in WL^p(\mathbb R^n)\cap \mathrm{BMO}(\mathbb R^n)$ with $1\leq p<\infty$.

$(3)$ Let $1\leq r<\infty$. An important consequence of Theorem \ref{thmc} is the following:
\begin{equation*}
\|F\cdot G\|_{L^r}\leq C(r,n)\Big(\|F\|_{L^r}\|G\|_{\mathrm{BMO}}\cdot\|G\|_{L^r}\|F\|_{\mathrm{BMO}}\Big)^{1/2}
\end{equation*}
holds for all $F,G\in L^r(\mathbb R^n)\cap \mathrm{BMO}(\mathbb R^n)$. In fact, by using H\"older's inequality and Theorem \ref{thmc}, we obtain
\begin{equation*}
\begin{split}
\|F\cdot G\|_{L^r}&\leq\|F\|_{L^{2r}}\|G\|_{L^{2r}}\\
&\leq C(r,n)\Big(\|F\|_{L^r}^{1/2}\|F\|_{\mathrm{BMO}}^{1/2}\cdot\|G\|_{L^r}^{1/2}\|G\|_{\mathrm{BMO}}^{1/2}\Big)\\
&=C(r,n)\Big(\|F\|_{L^r}\|G\|_{\mathrm{BMO}}\cdot\|G\|_{L^r}\|F\|_{\mathrm{BMO}}\Big)^{1/2},
\end{split}
\end{equation*}
as desired. From the elementary inequality $2(ab)^{1/2}\leq a+b$, $a,b>0$, we have the following bilinear estimates in $L^r(\mathbb R^n)\cap\mathrm{BMO}(\mathbb R^n)$.
\begin{equation*}
\|F\cdot G\|_{L^r}\leq C(r,n)\Big(\|F\|_{L^r}\|G\|_{\mathrm{BMO}}+\|G\|_{L^r}\|F\|_{\mathrm{BMO}}\Big).
\end{equation*}
These bilinear estimates were proved by Kozono and Taniuchi in \cite{Kozono}. Such kind of inequalities are used to extend some results on uniqueness and regularity of weak solutions to the Navier--Stokes equations.
\end{rem}

\subsection{Interpolation inequalities in the Morrey spaces}
\label{sec3}
In this section, we are concerned with several interpolation inequalities in the Morrey spaces.
\begin{thm}\label{Thm1}
Let $1\leq p<\infty$, $0<\kappa<1$ and $n\in\mathbb N$. If $f\in L^{p,\kappa}(\mathbb R^n)\cap L^\infty(\mathbb R^n)$, then for every $q$ with $p<q<\infty$, we have $f\in L^{q,\kappa}(\mathbb R^n)$, and the following inequality holds.
\begin{equation*}
\|f\|_{L^{q,\kappa}}\leq 1\cdot\big(\|f\|_{L^{p,\kappa}}\big)^{p/q}\cdot\big(\|f\|_{L^\infty}\big)^{1-p/q}.
\end{equation*}
\end{thm}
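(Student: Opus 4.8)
The plan is to imitate the elementary argument of Theorem~\ref{thma}, carrying it out cube by cube so as to respect the local character of the Morrey norm. The decisive observation is that the weight $|Q|^{-\kappa}$ occurs identically in the definitions of $\|\cdot\|_{L^{p,\kappa}}$ and $\|\cdot\|_{L^{q,\kappa}}$, so it will simply pass through the estimate unchanged.

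First I would fix an arbitrary cube $Q\subset\mathbb R^n$ and split $|f(x)|^q=|f(x)|^p\cdot|f(x)|^{q-p}$, which is legitimate since $q-p>0$. Using the pointwise bound $|f(x)|^{q-p}\leq\big(\|f\|_{L^\infty}\big)^{q-p}$, valid for almost every $x$ and independent of $Q$, I would factor the $L^\infty$ term out of the integral to obtain
\[
\frac{1}{|Q|^{\kappa}}\int_{Q}|f(x)|^q\,dx\leq\big(\|f\|_{L^\infty}\big)^{q-p}\cdot\frac{1}{|Q|^{\kappa}}\int_{Q}|f(x)|^p\,dx.
\]
Next I would bound the remaining average by the very definition of the Morrey norm, namely $\frac{1}{|Q|^{\kappa}}\int_{Q}|f(x)|^p\,dx\leq\big(\|f\|_{L^{p,\kappa}}\big)^p$. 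Taking the supremum over all cubes $Q$ and then raising to the power $1/q$ gives
\[
\|f\|_{L^{q,\kappa}}\leq\Big(\big(\|f\|_{L^\infty}\big)^{q-p}\cdot\big(\|f\|_{L^{p,\kappa}}\big)^p\Big)^{1/q}=\big(\|f\|_{L^{p,\kappa}}\big)^{p/q}\cdot\big(\|f\|_{L^\infty}\big)^{1-p/q},
\]
which is exactly the asserted inequality, with embedding constant $1$; finiteness of the right-hand side simultaneously yields $f\in L^{q,\kappa}(\mathbb R^n)$.

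There is essentially no hard step: the argument is as simple as its Lebesgue-space prototype. The only point requiring care is the order of operations — the uniform $L^\infty$ bound must be extracted before the supremum over cubes is taken, so that the cube-dependent factor $|Q|^{-\kappa}$ pairs correctly with $\int_{Q}|f|^p$ and no constant is lost. I would expect no genuine obstacle, and in particular the hypothesis $p<q$ guarantees $q-p>0$, so that the exponent splitting and all the ensuing bounds are well defined.
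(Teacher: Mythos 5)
Your proof is correct and follows essentially the same route as the paper: fix a cube $Q$, split $|f|^q=|f|^p\cdot|f|^{q-p}$, pull out the $L^\infty$ factor, bound the remaining average by $\big(\|f\|_{L^{p,\kappa}}\big)^p$, and take the supremum over cubes (the fact that $\sup$ commutes with the monotone map $t\mapsto t^{1/q}$ makes your order of operations equivalent to the paper's). No gaps.
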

\begin{proof}
Using the same strategy as in Theorem \ref{thma}, for any cube $Q$ in $\mathbb R^n$, we obtain
\begin{equation*}
\begin{split}
\bigg(\frac{1}{|Q|^{\kappa}}\int_{Q}|f(y)|^q\,dy\bigg)^{1/q}
&=\bigg(\frac{1}{|Q|^{\kappa}}\int_{Q}|f(y)|^p\cdot|f(y)|^{q-p}\,dy\bigg)^{1/q}\\
&\leq\bigg(\frac{1}{|Q|^{\kappa}}\int_{Q}|f(y)|^p\,dy\bigg)^{1/q}\cdot\big(\|f\|_{L^\infty}\big)^{1-p/q}\\
&\leq\big(\|f\|_{L^{p,\kappa}}\big)^{p/q}\cdot\big(\|f\|_{L^\infty}\big)^{1-p/q},
\end{split}
\end{equation*}
where we have used the fact that $q-p>0$. By taking the supremum over all cubes $Q$ in $\mathbb R^n$, we complete the proof of Theorem \ref{Thm1}.
\end{proof}

\begin{thm}\label{Thm2}
Let $1\leq p<\infty$, $0<\kappa<1$ and $n\in\mathbb N$. If $f\in WL^{p,\kappa}(\mathbb R^n)\cap L^\infty(\mathbb R^n)$, then for every $q$ with $p<q<\infty$, we have $f\in L^{q,\kappa}(\mathbb R^n)$, and the following inequality holds.
\begin{equation*}
\|f\|_{L^{q,\kappa}}\leq\left(\frac{q}{q-p}\right)^{1/q}\cdot\big(\|f\|_{WL^{p,\kappa}}\big)^{p/q}\cdot\big(\|f\|_{L^\infty}\big)^{1-p/q}.
\end{equation*}
\end{thm}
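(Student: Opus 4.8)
The plan is to follow the distributional strategy of Theorem~\ref{thmb}, but to carry it out locally on each cube rather than globally on $\mathbb R^n$. The key observation is that the weak Morrey norm provides exactly the cube-wise distributional estimate needed, with an extra geometric factor $|Q|^{\kappa}$ that will factor out cleanly.

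Fix an arbitrary cube $Q\subset\mathbb R^n$. First I would rewrite the local $L^q$-integral through its distribution function relative to $Q$:
\[
\int_Q |f(y)|^q\,dy = \int_0^\infty \big|\big\{y\in Q: |f(y)|>\lambda\big\}\big|\, q\lambda^{q-1}\,d\lambda.
\]
Since $f\in L^\infty(\mathbb R^n)$, the set $\{y\in Q:|f(y)|>\lambda\}$ is null whenever $\lambda\geq\|f\|_{L^\infty}$, so the integral may be truncated at the level $\|f\|_{L^\infty}$, exactly as in the proof of Theorem~\ref{thmb}.

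Next I would invoke the definition of the weak Morrey norm for this fixed cube, which gives $\lambda\cdot|\{y\in Q:|f(y)|>\lambda\}|^{1/p}\leq |Q|^{\kappa/p}\|f\|_{WL^{p,\kappa}}$, and hence
\[
\big|\big\{y\in Q:|f(y)|>\lambda\big\}\big|\leq \frac{|Q|^{\kappa}\,\big(\|f\|_{WL^{p,\kappa}}\big)^p}{\lambda^p}.
\]
Substituting this bound into the truncated integral and using $q-p>0$ to guarantee convergence yields
\[
\int_Q |f(y)|^q\,dy \leq \frac{q}{q-p}\,|Q|^{\kappa}\,\big(\|f\|_{WL^{p,\kappa}}\big)^p\,\big(\|f\|_{L^\infty}\big)^{q-p}.
\]
Crucially, the factor $|Q|^{\kappa}$ does not depend on $\lambda$, so it passes outside the $\lambda$-integral and then cancels against the $|Q|^{-\kappa}$ appearing in the Morrey quantity. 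Dividing by $|Q|^{\kappa}$, taking the $q$-th root, and finally taking the supremum over all cubes $Q$ gives precisely the asserted inequality.

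Since each step is merely a localized version of the corresponding step in Theorem~\ref{thmb}, there is no genuinely new obstacle here; the only point requiring care is the bookkeeping of the $|Q|^{\kappa}$ factor. I expect the mildly delicate point to be confirming that the estimate for $\int_Q|f|^q$ is uniform in $Q$ up to the single factor $|Q|^{\kappa}$, so that passing to the supremum over cubes is legitimate and reproduces the stated constant $\big(q/(q-p)\big)^{1/q}$.
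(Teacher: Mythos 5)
Your proposal is correct and follows essentially the same argument as the paper's own proof: fix a cube $Q$, express the local $L^q$-integral via the distribution function, truncate at $\|f\|_{L^\infty}$, apply the weak Morrey bound $\big|\{y\in Q:|f(y)|>\lambda\}\big|\leq |Q|^{\kappa}\big(\|f\|_{WL^{p,\kappa}}\big)^p\lambda^{-p}$, integrate using $q-p>0$, and take $q$-th roots and the supremum over cubes. The handling of the $|Q|^{\kappa}$ factor matches the paper exactly, so no further comment is needed.
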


\begin{proof}
Using the same strategy as in Theorem \ref{thmb}, for any given cube $Q$ in $\mathbb R^n$, we have
\begin{equation}\label{WLpk}
\begin{split}
&\frac{1}{|Q|^{\kappa}}\int_{Q}|f(y)|^q\,dy\\
&=\frac{1}{|Q|^{\kappa}}\int_0^{\infty}\big|\big\{y\in Q:|f(y)|>\lambda\big\}\big|q\lambda^{q-1}\,d\lambda\\
&=\frac{1}{|Q|^{\kappa}}\bigg(\int_0^{\|f\|_{L^\infty}}+\int_{\|f\|_{L^\infty}}^{\infty}\bigg)
\big|\big\{y\in Q:|f(y)|>\lambda\big\}\big|q\lambda^{q-1}\,d\lambda.
\end{split}
\end{equation}
Note that for any cube $Q\subset\mathbb R^n$,
\begin{equation*}
\big|\big\{y\in Q:|f(y)|>\lambda\big\}\big|=0,\quad \mathrm{when}~\lambda\geq\|f\|_{L^\infty}.
\end{equation*}
Hence, the second integral in \eqref{WLpk} is equal to $0$. Let us now estimate the first integral in \eqref{WLpk}. Since $f\in WL^{p,\kappa}(\mathbb R^n)$ with $1\leq p<\infty$ and $0<\kappa<1$, we thus obtain
\begin{equation*}
\begin{split}
&\frac{1}{|Q|^{\kappa}}\int_0^{\|f\|_{L^\infty}}\big|\big\{y\in Q:|f(y)|>\lambda\big\}\big|q\lambda^{q-1}\,d\lambda\\
&\leq\frac{1}{|Q|^{\kappa}}\int_0^{\|f\|_{L^\infty}}\bigg(\frac{|Q|^{\kappa/p}\|f\|_{WL^{p,\kappa}}}{\lambda}\bigg)^pq\lambda^{q-1}\,d\lambda\\
&=\frac{q}{q-p}\cdot\big(\|f\|_{WL^{p,\kappa}}\big)^p\cdot\big(\|f\|_{L^\infty}\big)^{q-p}.
\end{split}
\end{equation*}
Observe also that the above integral converges since $q-p>0$. Taking $q$-th roots of both sides in \eqref{WLpk} and the supremum over all cubes $Q$ in $\mathbb R^n$, we finish the proof of Theorem \ref{Thm2}.
\end{proof}
\begin{rem}
Here we find explicit values for these constants. It can be shown again that the embedding constant $1$ in Theorem \ref{Thm1} and the embedding constant $[q/{(q-p)}]^{1/q}$ in Theorem \ref{Thm2} are best possible.
\end{rem}
Based on certain appropriate Calderon--Zygmund decomposition, we can prove the following result, which is a generalization of Theorem \ref{thmc}.
\begin{thm}\label{Thm3}
Let $1\leq p<\infty$, $0<\kappa<1$ and $n\in\mathbb N$. If $f\in L^{p,\kappa}(\mathbb R^n)\cap\mathrm{BMO}(\mathbb R^n)$, then for every $q$ with $p<q<\infty$, we have $f\in L^{q,\kappa}(\mathbb R^n)$, and the following inequality holds.
\begin{equation*}
\|f\|_{L^{q,\kappa}}\leq \widetilde{C}(p,q,n)\cdot\big(\|f\|_{L^{p,\kappa}}\big)^{p/q}\cdot\big(\|f\|_{\mathrm{BMO}}\big)^{1-p/q},
\end{equation*}
where
\begin{equation*}
\widetilde{C}(p,q,n):=\bigg[\frac{q}{q-p}+2^{\frac{nq}{p}}+2^{(n+1)q}\Gamma(q+1)e^{b+q+1}\bigg]^{1/q} \quad \mathrm{with}~~ b=\frac{1}{2^{\frac{n}{p'}+1}\cdot e}.
\end{equation*}
\end{thm}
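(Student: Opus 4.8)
The plan is to reduce the Morrey estimate to a uniform family of per-cube Lebesgue-type estimates and then to run the Calder\'on--Zygmund argument of Theorem~\ref{thmc} locally on each cube. Fix an arbitrary cube $Q\subset\mathbb R^n$; it suffices to bound
\[
N_Q:=\frac{1}{|Q|^{\kappa}}\int_Q|f(y)|^q\,dy
\]
by $\widetilde{C}(p,q,n)^q\cdot\big(\|f\|_{L^{p,\kappa}}\big)^{p}\cdot\big(\|f\|_{\mathrm{BMO}}\big)^{q-p}$ uniformly in $Q$, since taking the supremum over all $Q$ and extracting $q$-th roots then yields the assertion. The two structural inputs that will replace the global $L^p$ facts used in Theorem~\ref{thmc} are the localized Chebyshev--Morrey bound
\[
\frac{1}{|Q|^{\kappa}}\big|\big\{y\in Q:|f(y)|>\lambda\big\}\big|\leq\Big(\frac{\|f\|_{L^{p,\kappa}}}{\lambda}\Big)^{p},
\]
and, for any disjoint family of subcubes $\{Q_k\}$ of $Q$ on which $\frac{1}{|Q_k|}\int_{Q_k}|f|^p>\sigma$, the packing bound $\frac{1}{|Q|^{\kappa}}\sum_k|Q_k|\leq\|f\|_{L^{p,\kappa}}^p/\sigma$. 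Both follow immediately from the definition of the Morrey norm together with Chebyshev's inequality and disjointness.

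Next I would set $\sigma:=\|f\|_{\mathrm{BMO}}^p$ and, when $\frac{1}{|Q|}\int_Q|f|^p\leq\sigma$, perform the dyadic Calder\'on--Zygmund decomposition of $|f|^p$ relative to $Q$ at height $\sigma$, obtaining disjoint dyadic subcubes $\{Q_k\}$ with $\sigma<\frac{1}{|Q_k|}\int_{Q_k}|f|^p\leq 2^n\sigma$ and $|f|\leq\|f\|_{\mathrm{BMO}}$ a.e. on $Q\setminus\bigcup_kQ_k$. Expressing $N_Q$ through the distribution function and splitting the integral over the good set $Q\setminus\bigcup_kQ_k$ and the bad set $\bigcup_kQ_k$, I would then estimate exactly as in Theorem~\ref{thmc}: the good set contributes at most $\frac{q}{q-p}\big(\|f\|_{L^{p,\kappa}}\big)^p\big(\|f\|_{\mathrm{BMO}}\big)^{q-p}$ by the Chebyshev--Morrey bound; on each bad cube I use $|f(y)|\leq|f(y)-f_{Q_k}|+2^{n/p}\|f\|_{\mathrm{BMO}}$, so the low part $\lambda\leq 2^{n/p}\|f\|_{\mathrm{BMO}}$ contributes at most $2^{nq/p}\big(\|f\|_{L^{p,\kappa}}\big)^p\big(\|f\|_{\mathrm{BMO}}\big)^{q-p}$ via the packing bound, and the high part is controlled by the John--Nirenberg inequality (Theorem~\ref{JN}) after the substitution $\nu=2^ne\|f\|_{\mathrm{BMO}}\mu$, producing the $2^{nq}\Gamma(q+1)e^{a+q+1}$-type term. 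This reproduces the three summands of $C(p,q,n)$ with the Morrey norm in place of the $L^p$ norm.

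The main obstacle, and the reason the final constant is genuinely larger than that of Theorem~\ref{thmc}, is that the above decomposition is only legitimate when the average of $|f|^p$ over $Q$ does not exceed the John--Nirenberg threshold. For small cubes this fails: since $\frac{1}{|Q|}\int_Q|f|^p\leq|Q|^{\kappa-1}\|f\|_{L^{p,\kappa}}^p$ and $\kappa<1$, the average is large when $|Q|$ is small, the cube $Q$ is itself selected, and the stopping-time construction degenerates. To handle this regime I would compare $Q$ with its concentric double $\widehat Q$, writing $|f(y)|\leq|f(y)-f_{\widehat Q}|+|f_{\widehat Q}|$ and controlling the oscillation against $f_{\widehat Q}$ by John--Nirenberg on $\widehat Q$; because the comparison passes through the larger cube, the effective length scale in the exponential doubles from $2^ne\|f\|_{\mathrm{BMO}}$ to $2^{n+1}e\|f\|_{\mathrm{BMO}}$, which is precisely what turns the factor $2^{nq}$ into $2^{(n+1)q}$ and the constant $a=\tfrac{1}{2^{n/p'}e}$ into $b=\tfrac{a}{2}=\tfrac{1}{2^{n/p'+1}e}$, while the harmless volume ratio $(|\widehat Q|/|Q|)^{\kappa}\leq 2^{n}$ is absorbed upon taking $q$-th roots. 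I expect this reconciliation of the normalization $|Q|^{-\kappa}$ with the scale at which the decomposition becomes valid to be the delicate point of the argument; once it is in place, summing the good-set, low, and high contributions in both regimes and taking $q$-th roots delivers the constant $\widetilde{C}(p,q,n)$.
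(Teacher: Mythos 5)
Your first regime is handled correctly, and in fact more cleanly than in the paper: because your stopping-time cubes are genuine subcubes of $Q$, John--Nirenberg (Theorem~\ref{JN}) is applied to $f$ itself on each $Q_k$, and your three summands come out with the constants of Theorem~\ref{thmc}. (The paper instead decomposes $|f\chi_Q|^p$ by a global Calder\'on--Zygmund decomposition, whose cubes may stick out of $Q$, and must then invoke the claim \eqref{diff} that $\|f\chi_Q\|_{\mathrm{BMO}}\le 2\|f\|_{\mathrm{BMO}}$; that claim is false, since sharp truncation does not preserve $\mathrm{BMO}$: for $f(x)=\max(\log(1/|x|),0)$ and $Q=[0,1]^n$ one has $f\chi_Q\notin\mathrm{BMO}(\mathbb R^n)$, because on small cubes centered at the origin $f\chi_Q$ vanishes on a fixed fraction and is of size $\log(1/\epsilon)$ on another. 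So the paper's own proof collapses exactly at the point you call delicate.) The genuine gap is in your second regime. After writing $|f(y)|\le|f(y)-f_{\widehat Q}|+|f_{\widehat Q}|$ you never control $|f_{\widehat Q}|$, and no bound $|f_{\widehat Q}|\le C\|f\|_{\mathrm{BMO}}$ is available: for the same $f$ and $Q$ of side $\ell$ centered at the origin, $|f_{\widehat Q}|\sim\log(1/\ell)\to\infty$ while $\|f\|_{\mathrm{BMO}}\le C_n$. Also, one doubling need not restore the good regime (the mean of $|f|^p$ over $\widehat Q$ may still exceed $\sigma$), and John--Nirenberg on $\widehat Q$ carries the same exponential scale $2^ne\|f\|_{\mathrm{BMO}}$ as on $Q$, the enlargement costing only a volume factor $2^n$; so the mechanism by which you predict $2^{nq}\mapsto 2^{(n+1)q}$ and $a\mapsto b=a/2$ does not exist.

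Moreover, this gap cannot be filled by any method, because the target constant $\widetilde C(p,q,n)$ is independent of $\kappa$ and no $\kappa$-free constant can work. For $f(x)=\max(\log(1/|x|),0)$, maximizing $\ell^{n(1-\kappa)}(\log(1/\ell))^r$ over the side length $\ell$ shows that $\|f\|_{L^{r,\kappa}}$ is comparable, up to constants depending only on $n$, to $r/(1-\kappa)$ when $\kappa$ is close to $1$, while $\|f\|_{\mathrm{BMO}}\le C_n$; inserting this into the asserted inequality forces $\widetilde C\ge c(p,q,n)\,(1-\kappa)^{-(1-p/q)}\to\infty$ as $\kappa\to 1^{-}$. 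This blow-up is structural: $L^{p,1}(\mathbb R^n)=L^\infty(\mathbb R^n)$, and an estimate $\|f\|_{L^\infty}\le C\|f\|_{L^\infty}^{p/q}\|f\|_{\mathrm{BMO}}^{1-p/q}$ would give $L^\infty\supset\mathrm{BMO}$, which is false. What your strategy does prove is the correct $\kappa$-dependent statement: in the bad regime take the smallest $J$ with $\frac{1}{|2^JQ|}\int_{2^JQ}|f|^p\le\sigma$, telescope $|f_Q|\le(1+2^nJ)\|f\|_{\mathrm{BMO}}$, estimate $\int_Q|f-f_Q|^q$ by John--Nirenberg on $Q$ itself, and absorb the factor $(1+2^nJ)^q$ using the inequality $|2^jQ|^{1-\kappa}\le\|f\|_{L^{p,\kappa}}^p/\sigma$, valid for all $j<J$; this completes your outline and yields the theorem with a constant of order $C_n\,q/(1-\kappa)$, which is the best possible form of the result.
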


\begin{proof}
By the assumption $f\in L^{p,\kappa}(\mathbb R^n)\cap\mathrm{BMO}(\mathbb R^n)$, we know that
\begin{equation*}
0<\|f\|_{\mathrm{BMO}}<+\infty.
\end{equation*}
For any fixed cube $Q\subset\mathbb R^n$, it suffices to prove that
\begin{equation}\label{wanghua12}
\frac{1}{|Q|^{\kappa}}\int_{Q}|f(y)|^q\,dy\leq C\big(\|f\|_{L^{p,\kappa}}\big)^p\cdot\big(\|f\|_{\mathrm{BMO}}\big)^{q-p}
\end{equation}
holds for some positive constant $C$. Notice that $f\cdot\chi_{Q}\in L^p(\mathbb R^n)$ for such $Q$ and $1\leq p<\infty$. Here we denote the characteristic function of $Q$ by $\chi_Q$. We now apply the Calderon--Zygmund decomposition of $\mathcal{F}:=|f\cdot\chi_{Q}|^p$ at height $\sigma:=\|f\|^p_{\mathrm{BMO}}$ to obtain a collection of disjoint cubes $\{\mathcal{Q}_k\}_{k\geq1}$ such that the following properties hold.
\begin{equation*}
\begin{cases}
\mathbb R^n=F\cup\Omega, ~F\cap\Omega=\O;&\\
\big|f(y)\cdot\chi_{Q}(y)\big|^p\leq\sigma,~~a.e.~ y\in F;&\\
\Omega=\bigcup_{k=1}^{\infty}\mathcal{Q}_k,~\sigma<\frac{1}{|\mathcal{Q}_k|}\int_{\mathcal{Q}_k}\big|f(y)\cdot\chi_{Q}(y)\big|^p\,dy\leq 2^n\sigma,~\forall\,k\in \mathbb{N}.
\end{cases}
\end{equation*}
According to the above Calderon--Zygmund decomposition, the left-hand side in \eqref{wanghua12} can be estimated as follows.
\begin{equation}\label{bmo2}
\begin{split}
\frac{1}{|Q|^{\kappa}}\int_{Q}|f(y)|^q\,dy
&=\frac{1}{|Q|^{\kappa}}\int_0^{\infty}\big|\big\{y\in Q:|f(y)|>\lambda\big\}\big|q\lambda^{q-1}\,d\lambda\\
&=\frac{1}{|Q|^{\kappa}}\int_0^{\infty}\big|\big\{y\in Q\cap F:|f(y)|>\lambda\big\}\big|q\lambda^{q-1}\,d\lambda\\
&+\frac{1}{|Q|^{\kappa}}\int_0^{\infty}\big|\big\{y\in Q\cap\Omega:|f(y)|>\lambda\big\}\big|q\lambda^{q-1}\,d\lambda\\
&:=\mathrm{III+IV}.
\end{split}
\end{equation}
To estimate the term $\mathrm{III}$, we consider the following two cases: $\lambda\geq \|f\|_{\mathrm{BMO}}$ and $0<\lambda<\|f\|_{\mathrm{BMO}}$.
Note that $|f(y)\cdot\chi_{Q}(y)|\leq\|f\|_{\mathrm{BMO}}$, $a.e.~ y\in F$. This gives
\begin{equation*}
\big|\big\{y\in F:|f(y)\cdot\chi_{Q}(y)|>\lambda\big\}\big|=0
\end{equation*}
whenever $\lambda\geq \|f\|_{\mathrm{BMO}}$, or equivalently,
\begin{equation*}
\big|\big\{y\in Q\cap F:|f(y)|>\lambda\big\}\big|=0.
\end{equation*}
Consequently,
\begin{equation*}
\begin{split}
\mathrm{III}&=\frac{1}{|Q|^{\kappa}}\bigg(\int_0^{\|f\|_{\mathrm{BMO}}}+\int_{\|f\|_{\mathrm{BMO}}}^{\infty}\bigg)
\big|\big\{y\in Q\cap F:|f(y)|>\lambda\big\}\big|q\lambda^{q-1}\,d\lambda\\
&=\frac{1}{|Q|^{\kappa}}\int_0^{\|f\|_{\mathrm{BMO}}}\big|\big\{y\in Q\cap F:|f(y)|>\lambda\big\}\big|q\lambda^{q-1}\,d\lambda\\
&\leq\frac{1}{|Q|^{\kappa}}\int_0^{\|f\|_{\mathrm{BMO}}}\big|\big\{y\in Q:|f(y)|>\lambda\big\}\big|q\lambda^{q-1}\,d\lambda.
\end{split}
\end{equation*}
Since $f\in L^{p,\kappa}(\mathbb R^n)$ with $1\leq p<q$ and $0<\kappa<1$, we see that the last expression is dominated by
\begin{equation*}
\begin{split}
&\frac{1}{|Q|^{\kappa}}\int_0^{\|f\|_{\mathrm{BMO}}}\bigg(\frac{|Q|^{\kappa/p}\|f\|_{L^{p,\kappa}}}{\lambda}\bigg)^pq\lambda^{q-1}\,d\lambda\\
&=\frac{q}{q-p}\big(\|f\|_{L^{p,\kappa}}\big)^p\cdot\big(\|f\|_{\mathrm{BMO}}\big)^{q-p}.
\end{split}
\end{equation*}
We now turn to deal with the term $\mathrm{IV}$. For an arbitrary point $y\in Q\cap \mathcal{Q}_k$, $k=1,2,\dots$, we have
\begin{equation*}
\begin{split}
\big|f(y)\big|&=\big|f(y)\cdot\chi_{Q}(y)\big|\\
&\leq\big|(f\chi_{Q})(y)-(f\chi_Q)_{\mathcal{Q}_k}\big|+\frac{1}{|\mathcal{Q}_k|}\int_{\mathcal{Q}_k}\big|f(y)\cdot\chi_{Q}(y)\big|\,dy\\
&\leq\big|(f\chi_{Q})(y)-(f\chi_Q)_{\mathcal{Q}_k}\big|+\bigg(\frac{1}{|\mathcal{Q}_k|}\int_{\mathcal{Q}_k}\big|f(y)\cdot\chi_{Q}(y)\big|^p\,dy\bigg)^{1/p}\\
&\leq\big|(f\chi_{Q})(y)-(f\chi_Q)_{\mathcal{Q}_k}\big|+2^{n/p}\|f\|_{\mathrm{BMO}}.
\end{split}
\end{equation*}
Hence
\begin{equation*}
\begin{split}
\mathrm{IV}&\leq\frac{1}{|Q|^{\kappa}}\int_0^{\infty}\sum_{k=1}^\infty\big|\big\{y\in Q\cap \mathcal{Q}_k:|f(y)|>\lambda\big\}\big|q\lambda^{q-1}\,d\lambda\\
&\leq\frac{1}{|Q|^{\kappa}}\int_0^{\infty}\sum_{k=1}^\infty\big|\big\{y\in Q\cap \mathcal{Q}_k:\big|(f\chi_{Q})(y)-(f\chi_Q)_{\mathcal{Q}_k}\big|>\lambda-2^{\frac{n}{p}}\|f\|_{\mathrm{BMO}}\big\}\big|q\lambda^{q-1}\,d\lambda\\
&=\frac{1}{|Q|^{\kappa}}\bigg(\int_0^{2^{\frac{n}{p}}\|f\|_{\mathrm{BMO}}}+\int_{2^{\frac{n}{p}}\|f\|_{\mathrm{BMO}}}^\infty\bigg)\sum_{k=1}^\infty\\
&\big|\big\{y\in Q\cap \mathcal{Q}_k:\big|(f\chi_{Q})(y)-(f\chi_Q)_{\mathcal{Q}_k}\big|>\lambda-2^{\frac{n}{p}}\|f\|_{\mathrm{BMO}}\big\}\big|q\lambda^{q-1}\,d\lambda.
\end{split}
\end{equation*}
Observe that if $\lambda<2^{n/p}\|f\|_{\mathrm{BMO}}$, then for each cube $\mathcal{Q}_k$, it holds that
\begin{equation*}
\big\{y\in Q\cap\mathcal{Q}_k:\big|(f\chi_{Q})(y)-(f\chi_Q)_{\mathcal{Q}_k}\big|
>\lambda-2^{\frac{n}{p}}\|f\|_{\mathrm{BMO}}\big\}=Q\cap\mathcal{Q}_k,
\end{equation*}
which further implies that
\begin{equation}\label{222}
\begin{split}
&\sum_{k=1}^\infty\big|\big\{y\in Q\cap\mathcal{Q}_k:\big|(f\chi_{Q})(y)-(f\chi_Q)_{\mathcal{Q}_k}\big|
>\lambda-2^{\frac{n}{p}}\|f\|_{\mathrm{BMO}}\big\}\big|\\
&=\sum_{k=1}^\infty|Q\cap\mathcal{Q}_k|\leq\sum_{k=1}^\infty|\mathcal{Q}_k|\\
&<\frac{1}{\sigma}\sum_{k=1}^\infty\int_{\mathcal{Q}_k}\big|f(y)\cdot\chi_{Q}(y)\big|^p\,dy
=\frac{1}{\sigma}\int_{\bigcup_{k=1}^{\infty}\mathcal{Q}_k}\big|f(y)\cdot\chi_{Q}(y)\big|^p\,dy\\
&\leq\frac{1}{\sigma}\int_{Q}\big|f(y)\big|^p\,dy
\leq\left(\frac{\|f\|_{L^{p,\kappa}}}{\|f\|_{\mathrm{BMO}}}\right)^p|Q|^{\kappa}.
\end{split}
\end{equation}
Thus, we find that
\begin{equation*}
\begin{split}
&\frac{1}{|Q|^{\kappa}}\int_0^{2^{\frac{n}{p}}\|f\|_{\mathrm{BMO}}}\sum_{k=1}^\infty
\big|\big\{y\in Q\cap\mathcal{Q}_k:\big|(f\chi_{Q})(y)-(f\chi_Q)_{\mathcal{Q}_k}\big|
>\lambda-2^{\frac{n}{p}}\|f\|_{\mathrm{BMO}}\big\}\big|q\lambda^{q-1}\,d\lambda\\
&\leq\int_0^{2^{\frac{n}{p}}\|f\|_{\mathrm{BMO}}}
\left(\frac{\|f\|_{L^{p,\kappa}}}{\|f\|_{\mathrm{BMO}}}\right)^p q\lambda^{q-1}\,d\lambda\\
&=2^{\frac{nq}{p}}\big(\|f\|_{L^{p,\kappa}}\big)^p\cdot\big(\|f\|_{\mathrm{BMO}}\big)^{q-p}.
\end{split}
\end{equation*}
On the other hand, for fixed cube $Q$ and $f\in \mathrm{BMO}(\mathbb R^n)$, we first claim that $f\chi_Q\in \mathrm{BMO}(\mathbb R^n)$, and the following inequality holds:
\begin{equation}\label{diff}
\|f\chi_Q\|_{\mathrm{BMO}}\leq 2\|f\|_{\mathrm{BMO}}.
\end{equation}
In fact, for any cube $\mathcal{R}\subset\mathbb R^n$, by a simple calculation, we have
\begin{equation*}
\begin{split}
&\frac{1}{|\mathcal{R}|}\int_{\mathcal{R}}\big|(f\chi_{Q})(y)-(f\chi_{Q})_{\mathcal{R}}\big|\,dy\\
&\leq\frac{1}{|\mathcal{R}|}\int_{\mathcal{R}}\big|(f\chi_{Q})(y)-f_{\mathcal{R}}\big|\,dy
+\frac{1}{|\mathcal{R}|}\int_{\mathcal{R}}\big|f_{\mathcal{R}}-(f\chi_{Q})_{\mathcal{R}}\big|\,dy\\
&\leq \frac{2}{|\mathcal{R}|}\int_{\mathcal{R}}\big|(f\chi_{Q})(y)-f_{\mathcal{R}}\big|\,dy
=\frac{2}{|\mathcal{R}|}\int_{\mathcal{R}\cap Q}\big|f(y)-f_{\mathcal{R}}\big|\,dy\\
&\leq\frac{2}{|\mathcal{R}|}\int_{\mathcal{R}}\big|f(y)-f_{\mathcal{R}}\big|\,dy\leq 2\|f\|_{\mathrm{BMO}}.
\end{split}
\end{equation*}
This proves \eqref{diff} by taking the supremum over all the cubes $\mathcal{R}\subset\mathbb R^n$. Now, from Theorem \ref{JN} and inequality \eqref{diff}, it follows that
\begin{equation*}
\begin{split}
&\frac{1}{|Q|^{\kappa}}\sum_{k=1}^\infty\int_{2^{\frac{n}{p}}\|f\|_{\mathrm{BMO}}}^\infty
\big|\big\{y\in Q\cap \mathcal{Q}_k:\big|(f\chi_{Q})(y)-(f\chi_Q)_{\mathcal{Q}_k}\big|>\lambda-2^{\frac{n}{p}}\|f\|_{\mathrm{BMO}}\big\}\big|q\lambda^{q-1}\,d\lambda\\
&\leq\frac{1}{|Q|^{\kappa}}\sum_{k=1}^\infty\int_0^{\infty}
\big|\big\{y\in\mathcal{Q}_k:\big|(f\chi_{Q})(y)-(f\chi_Q)_{\mathcal{Q}_k}\big|>\nu\big\}\big|q\Big(\nu+2^{\frac{n}{p}}\|f\|_{\mathrm{BMO}}\Big)^{q-1}\,d\nu\\
&\leq\frac{eq}{|Q|^{\kappa}}\sum_{k=1}^\infty|\mathcal{Q}_k|\times
\int_0^{\infty}\exp\bigg\{-\frac{\nu}{2^n e\|f\chi_{Q}\|_{\mathrm{BMO}}}\bigg\}
\cdot\Big(\nu+2^{\frac{n}{p}}\|f\|_{\mathrm{BMO}}\Big)^{q-1}\,d\nu\\
&\leq\frac{eq}{|Q|^{\kappa}}\sum_{k=1}^\infty|\mathcal{Q}_k|\times
\int_0^{\infty}\exp\bigg\{-\frac{\nu}{2^{n+1}e\|f\|_{\mathrm{BMO}}}\bigg\}
\cdot\Big(\nu+2^{\frac{n}{p}}\|f\|_{\mathrm{BMO}}\Big)^{q-1}\,d\nu\\
&\leq eq\left(\frac{\|f\|_{L^{p,\kappa}}}{\|f\|_{\mathrm{BMO}}}\right)^p\int_0^{\infty}e^{-\mu}
\cdot\Big(2^{n+1} e\|f\|_{\mathrm{BMO}}\cdot\mu+2^{\frac{n}{p}}\|f\|_{\mathrm{BMO}}\Big)^{q-1}\big(2^{n+1}e\|f\|_{\mathrm{BMO}}\big)\,d\mu\\
&=eq\big(2^{n+1}e\big)^q\big(\|f\|_{L^{p,\kappa}}\big)^p\cdot\big(\|f\|_{\mathrm{BMO}}\big)^{q-p}
\int_0^{\infty}e^{-\mu}\left(\mu+\frac{1}{2^{\frac{n}{p'}+1}\cdot e}\right)^{q-1}d\mu,
\end{split}
\end{equation*}
where in the last inequality we have used the estimate \eqref{222}. As in the previous proof, one can also obtain
\begin{equation*}
\begin{split}
\int_0^{\infty}e^{-\mu}\left(\mu+\frac{1}{2^{\frac{n}{p'}+1}\cdot e}\right)^{q-1}d\mu
&=\int_{b}^{\infty}e^{-(u-b)}u^{q-1}du~\Big(b=\frac{1}{2^{\frac{n}{p'}+1}\cdot e}\Big)\\
&\leq e^b\Gamma(q).
\end{split}
\end{equation*}
Summarizing the estimates derived above, we conclude that
\begin{equation*}
\frac{1}{|Q|^{\kappa}}\int_{Q}|f(y)|^q\,dy\leq \mathcal{B}(p,q,n)\big(\|f\|_{L^{p,\kappa}}\big)^p\cdot\big(\|f\|_{\mathrm{BMO}}\big)^{q-p}.
\end{equation*}
Here
\begin{equation*}
\begin{split}
\mathcal{B}(p,q,n)&=\frac{q}{q-p}+2^{\frac{nq}{p}}+2^{(n+1)q}q\Gamma(q)e^{b+q+1}\\
&=\frac{q}{q-p}+2^{\frac{nq}{p}}+2^{(n+1)q}\Gamma(q+1)e^{b+q+1}\quad \mathrm{with}~~ b=\frac{1}{2^{\frac{n}{p'}+1}\cdot e}.
\end{split}
\end{equation*}
Thus, \eqref{wanghua12} holds. Taking $q$-th roots and the supremum over all cubes $Q$ in $\mathbb R^n$, we are done.
\end{proof}

\begin{rem}
$(1)$ As in Theorem \ref{thmc}, the constant $\widetilde{C}(p,q,n)$ in Theorem \ref{Thm3} also has the following form
\begin{equation*}
\widetilde{C}(p,q,n)=\mathcal{O}(q), \quad \mbox{as}~ q\to\infty,
\end{equation*}
that is, $C(p,q,n)\leq C_n\cdot q$ when $q$ is large with an absolute constant $C_n$ independent of $q$. Indeed, it is easy to see that
\begin{equation*}
\left(\frac{q}{q-p}\right)^{1/q}=\mathcal{O}(1),\quad \frac{b+q+1}{q}=\mathcal{O}(1)~(q\to\infty),
\quad \& \quad \big(2^{\frac{nq}{p}}\big)^{1/q}=2^{\frac{n}{p}}\leq 2^n.
\end{equation*}
Thus, the desired result follows from the well-known fact that $\Gamma(q+1)^{1/q}=\mathcal{O}(q)$ as $q\to\infty$.
Inspired by \cite[Theorem 2.2]{Kozono2}, it is natural to ask whether the growth order $q$ in Theorem \ref{Thm3} is sharp as $q\to\infty$(we believe this is true).

$(2)$ It is not clear whether the condition on $f$ in Theorem \ref{Thm3} can be weakened. We do not know whether the conclusion of Theorem \ref{Thm3} still holds provided $f\in WL^{p,\kappa}(\mathbb R^n)\cap\mathrm{BMO}(\mathbb R^n)$ for certain $p$ and $\kappa$.
\end{rem}

We proceed with some applications. Let $1\leq p<\infty$ and $0<\kappa<1$. For any given cube $Q$ in $\mathbb R^n$, by using the H\"{o}lder inequality, we get
\begin{equation*}
\begin{split}
\bigg(\int_{Q}|F(y)\cdot G(y)|^p\,dy\bigg)^{1/p}
&\leq\bigg(\int_{Q}|F(y)|^{2p}\,dy\bigg)^{1/{(2p)}}\bigg(\int_{Q}|G(y)|^{2p}\,dy\bigg)^{1/{(2p)}}\\
&\leq\Big(\|F\|_{L^{2p,\kappa}}|Q|^{\kappa/{(2p)}}\Big)\cdot\Big(\|G\|_{L^{2p,\kappa}}|Q|^{\kappa/{(2p)}}\Big)\\
&=\Big(\|F\|_{L^{2p,\kappa}}\cdot\|G\|_{L^{2p,\kappa}}\Big)|Q|^{\kappa/p}.
\end{split}
\end{equation*}
Then we have $F\cdot G$ is in $L^{p,\kappa}(\mathbb R^n)$ and
\begin{equation*}
\|F\cdot G\|_{L^{p,\kappa}}\leq\|F\|_{L^{2p,\kappa}}\cdot\|G\|_{L^{2p,\kappa}}.
\end{equation*}
Moreover, in view of Theorem \ref{Thm3}, we obtain that
\begin{equation*}
\begin{split}
\|F\cdot G\|_{L^{p,\kappa}}
&\leq\|F\|_{L^{2p,\kappa}}\|G\|_{L^{2p,\kappa}}\\
&\leq C(p,n)\Big(\|F\|_{L^{p,\kappa}}^{1/2}\|F\|_{\mathrm{BMO}}^{1/2}\cdot\|G\|_{L^{p,\kappa}}^{1/2}\|G\|_{\mathrm{BMO}}^{1/2}\Big)\\
&=C(p,n)\Big(\|F\|_{L^{p,\kappa}}\|G\|_{\mathrm{BMO}}\cdot\|G\|_{L^{p,\kappa}}\|F\|_{\mathrm{BMO}}\Big)^{1/2}
\end{split}
\end{equation*}
holds for all $F,G\in L^{p,\kappa}(\mathbb R^n)\cap\mathrm{BMO}(\mathbb R^n)$.
Finally, from the elementary inequality $2(ab)^{1/2}\leq a+b$, $a,b>0$, we have the following bilinear estimates in $L^{p,\kappa}(\mathbb R^n)\cap\mathrm{BMO}(\mathbb R^n)$.
\begin{equation*}
\|F\cdot G\|_{L^{p,\kappa}}\leq C(p,n)\Big(\|F\|_{L^{p,\kappa}}\|G\|_{\mathrm{BMO}}+\|G\|_{L^{p,\kappa}}\|F\|_{\mathrm{BMO}}\Big).
\end{equation*}
The Morrey space $L^{p,\kappa}(\mathbb R^n)$ was introduced (as a useful tool) to study the local behavior of solutions to second order elliptic partial differential equations. Later, this function space was found many important applications to some nonlinear elliptic and parabolic partial differential equations of the second order. We believe these bilinear estimates could be used in the study of certain problems(such as the global existence, uniqueness and regularity of weak solutions)in nonlinear elliptic and parabolic equations.
\begin{center}
References
\end{center}

\end{document}